\newcommand{\newjointcountertheorem}[3]{
	\newaliascnt{#1}{#2}
	\newtheorem{#1}[#1]{#3}
	\aliascntresetthe{#1}	
}
\newtheorem{thm}{Theorem}[section]
\theoremstyle{definition}
\DeclareMathOperator{\Res}{Res}
\DeclareMathOperator{\Syl}{Syl}
\DeclareMathOperator{\wt}{wt}
\newcommand{\BF}{\mathbb{F}_2}
\newcommand{\BFn}{\mathbb{F}_{2^n}}
\def\Snospace~{\S{}}
\numberwithin{equation}{section}
\colorlet{Changes@Color}{red}
\begin{document}

    
  \title{On the first fall degree of summation polynomials}
  \author{Stavros Kousidis and Andreas Wiemers}
  \address{Federal Office for Information Security, Godesberger Allee 185--189, 53175 Bonn, Germany}
  \email{st.kousidis@googlemail.com}
  
  \date{\today}

  \subjclass[2010]{13P15 Solving polynomial systems, 13P10 Gr{\"o}bner bases, 14H52 Elliptic curves}
  \keywords{Polynomial systems, Gr{\"o}bner bases, Discrete logarithm problem, Elliptic curve cryptosystem}

  \begin{abstract}
    We improve on the first fall degree bound of polynomial systems that arise
    from a Weil descent along Semaev's summation polynomials relevant to the solution of the Elliptic Curve
    Discrete Logarithm Problem via Gr{\"o}bner basis algorithms.
    
    \bigskip
    \noindent {K\tiny EYWORDS.} Polynomial systems, Gr{\"o}bner bases, Discrete logarithm problem, Elliptic curve cryptosystem.
    
    \bigskip
    \noindent {2010 M\tiny ATHEMATICS \footnotesize S\tiny UBJECT \footnotesize C\tiny LASSIFICATION.} 13P15 Solving polynomial systems, 13P10 Gr{\"o}bner bases, 14H52 Elliptic curves.
  \end{abstract}

  \maketitle

  \section{Introduction}
       
    Finding solutions to algebraic equations is a fundamental task.
    A common approach is a Gr{\"o}bner basis computation via an
    algorithm such as Faug\`ere's $F4$ and $F5$ (see \cite{Faugere99,Faugere02}).
    In recent applications Gr{\"o}bner basis techniques have become relevant
    to the solution of the Elliptic Curve Discrete Logarithm Problem (ECDLP).
    Here one seeks solutions to polynomial equations arising from a
    Weil descent along Semaev's summation polynomials \cite{Semaev04} which
    represents a crucial step in an index calculus
    method for the ECDLP, see e.g.~\cite{PetitQ12,Semaev15}.
    The efficiency of Gr{\"o}bner basis algorithms is governed by a so-called
    \textit{degree of regularity}, that is the highest degree
    occurring along the subsequent computation of algebraic relations.
    It is widely believed that this often intractable
    complexity parameter is closely approximated by the degree of the first
    non-trivial algebraic relation, the \textit{first fall degree}.
    In particular, the algorithms for the ECDLP of Petit and Quisquater
    \cite{PetitQ12} are sub-exponential
    under the assumption that this approximation is in o$(1)$.
    
    In the present paper, we will improve Petit's and Quisquater's \cite{PetitQ12}
    first fall degree bound $m^2 +1$ for the system arising from
    the Weil descent along Semaev's $(m+1)$-th summation polynomial.
    That is, we prove that a degree fall occurs at degree
    $m^2-m+1$ by exhibiting the highest degree homogeneous part of that polynomial system.
    In fact, this degree is $m^2 - m$, so that we expect the bound to be sharp except
    for the somewhat pathological case $m=2$ that has been discussed by
    Kosters and Yeo \cite{KostersY15}. This allows us to sharpen the asymptotic run time of the index calculus algorithm for the ECDLP as exhibited in the complexity analysis of Petit and Quisquater \cite{PetitQ12}.
  
  \section{The first fall degree}
  \label{sec:first-fall-degree}

    The notion of the first fall has been described by Faug\`ere and Joux \cite[Section 5.1]{faugerejoux2003}, Granboulan, Joux and Stern \cite[Section 3]{GranboulanJS06}, Dubois and Gama \cite[Section 2.2]{duboisgama2010}, and Ding and Hodges \cite[Section 3]{dinghodges2011}. Although the concept of the first fall degree has been called \textit{minimal degree} \cite{faugerejoux2003} and \textit{degree of regularity} \cite{dinghodges2011, duboisgama2010, GranboulanJS06}, we actually adopt the terminology and definition of Hodges, Petit and Schlather \cite{HodgesPS14}. For readability reasons we include a brief and tailored account of the first fall degree and refer the reader to \cite[Section 2]{HodgesPS14} for details and greater generality.
    
    Our considerations take place over a degree $n$ extension $\BFn$ of the binary field $\BF$. Consider the decomposition of the graded ring
    \[
    		S = \BFn[X_0,\ldots,X_{N-1}]/(X_0^2,\ldots,X_{N-1}^2)
    \]
    into its homogeneous components
    \[
      S = S_0 \oplus S_1 \oplus \cdots \oplus S_N .
    \]
    Each $S_j$ is the $\BFn$-vector space generated by the monomials of degree $j$.
    Let $I$ be an ideal in $S$ generated by homogeneous polynomials $h_1,\ldots,h_r \in S_d$ all of the same degree $d$. Then we have a surjective map
    \[
    \begin{array}{cccl}
      \phi : & S^r & \longrightarrow & I \\
	     & (g_1,\ldots,g_r) & \mapsto & g_1 h_1 + \cdots + g_r h_r  .
    \end{array}
    \]
    Without loss of generality we furthermore assume $0 < r = \dim_{\BFn} \sum_{j=1}^r \BFn h_j$. Let $e_i$ denote the canonical $i$-th basis element of the free $S$-module $S^r$.
    The $S$-module $U$ generated by the elements
    \begin{align*}
      h_j e_i + h_i e_j \mbox{ and } h_k e_k \mbox{, where $i,j,k = 1,\ldots,r$,}
    \end{align*}
     is a subset of $\ker (\phi)$. If we restrict $\phi$ to the $\BFn$-subvector space $S^r_{j-d} \subset S^r$
    we obtain a surjective map
    \[
    \begin{array}{cccl}
      \phi_{j-d} : & S^r_{j-d} & \longrightarrow & I \cap S_j
    \end{array}
    \]
    whose kernel contains the $\BFn$-subvector space $U_{j-d} = U \cap S^r_{j-d}$ and hence factors through
    \[
      \bar{\phi}_{j-d} :  S^r_{j-d} / U_{j-d} \rightarrow  I \cap S_j .
    \]
    \begin{dfn}[\protect{Cf.~\cite[Definition 2.1]{HodgesPS14}}]
    \label{dfn:first-fall-degree}
      The first fall degree of a homogeneous system $h_1,\ldots,h_r \in S_d$ and its linear span $\sum_{j=1}^r \BFn h_j$, respectively, is the smallest $j$ such that the induced
      $\BFn$-linear map $\bar{\phi}_{j-d}$ is not injective,
      that is the smallest $j$ such that $\dim_{\BFn} (I \cap S_j) < \dim_{\BFn} (S^r_{j-d} / U_{j-d})$.
      It is denoted by $D_{ff}(\sum_{j=1}^r \BFn h_j)$.
      
    \end{dfn}
	Following \cite{HodgesPS14} we now consider the ring of functions
	\[
		A_{\BFn} = \BFn[X_0,\ldots,X_{N-1}]/(X_0^2-X_0,\ldots,X_{N-1}^2-X_{N-1})
	\]
	as a finite-dimensional filtered algebra whose filtration components $[A_{\BFn}]_d$, $d \in \mathbb{N}$, are given by the polynomials up to degree $d$. The associated graded ring of $A_{\BFn}$ is
	\[
    		\mathrm{Gr}(A_{\BFn}) = {\BFn}[X_0,\ldots,X_{N-1}]/(X_0^2,\ldots,X_{N-1}^2)
    \]
	whose graded components
	\[
		[\mathrm{Gr}(A_{\BFn})]_d = [A_{\BFn}]_d / [A_{\BFn}]_{d-1} \mbox{, for $d \in \mathbb{N}$},
	\]
	are given by the homogeneous polynomials of degree $d$. Any linear subspace $V \subset [A_{\BFn}]_d$ induces a homogeneous linear subspace $\bar{V} \subset [\mathrm{Gr}(A_{\BFn})]_d$ via the canonical projection $\pi_d : [A_{\BFn}]_d \rightarrow [\mathrm{Gr}(A_{\BFn})]_d$.
	\begin{dfn}[\protect{Cf.~\cite[Definition 2.2]{HodgesPS14}}]
    \label{dfn:first-fall-degree-}
       Consider a polynomial system $p_1,\ldots,p_r \in [A_{\BFn}]_d$ and its linear span $V = \sum_{j=1}^r \BFn p_j \subset [A_{\BFn}]_d$, respectively. We assume without loss of generality that $\dim_{\BFn}V = r>0$. The first fall degree of $V$ is
      \[
    			D_{ff} \left( V \right) =
	    		\begin{cases}
    				d & \mbox{, $\dim_{\BFn}\bar{V} < \dim_{\BFn}V$,}\\
    				D_{ff}(\bar{V}) & \mbox{, else,}
	    		\end{cases}
	  \]
	  where $D_{ff}(\bar{V} = \sum_{j=1}^r \BFn \pi_d(p_j))$ is given in \autoref{dfn:first-fall-degree}.
    \end{dfn}
	
  \section{Weil descent along summation polynomials}
  
    We prove that the first fall degree of the polynomial system
    that arises from a Weil descent along Semaev's summation polynomial $S_{m+1}$ is bounded from above by $m^2-m +1$.
    This is an improvement over $m^2 +1$ that results from \cite[Theorem 5.2]{HodgesPS14} and \cite[Section 4]{PetitQ12}. Let us briefly introduce the summation polynomials and describe the Weil descent.
    
     Semaev \cite{Semaev04} introduced the $m$-th summation polynomial $S_m(x_1,\ldots,x_m) \in \mathbb{K}[x_1,\ldots,x_m]$ on an elliptic curve $E: y^2 = x^3 +a_4 x + a_6$ over a finite field $\mathbb{K}$ with $\mathrm{char}(\mathbb{K}) \neq 2,3$ by the following defining property: for elements $x_1,\ldots,x_m$ in the algebraic closure $\bar{\mathbb{K}}$ one has $S_m(x_1,\ldots,x_m) = 0$ if and only if there exist $y_1,\ldots,y_m \in \bar{\mathbb{K}}$ such that $(x_1,y_1),\ldots,(x_m,y_m) \in E(\bar{K})$ and $(x_1,y_1) + \ldots + (x_m,y_m) = 0$ on $E$. Semaev gave a recursive formula based on resultants to compute those polynomials and described some properties \cite[Theorem 1]{Semaev04}. The summation polynomials can also be given in characteristic $2$. We consider $\mathbb{K} = \BFn$, an ordinary, i.e.~non-singular, elliptic curve $E : y^2 + xy = x^3 +a_2 x^2 +a_6$, and the projection to the $x$-coordinate $x(P_i) = x(x_i,y_i) = x_i$ of $P_i \in E$ . Then, still
      \begin{align*}
      	S_2(x_1,x_2) & = x_1 - x_2
	\end{align*}
	and from Diem's general description \cite[Lemma 3.4, Lemma 3.5]{diem2011} one can deduce
	 \begin{align*}
		S_3(x_1,x_2,x_3) & = (x_1^2+x_2^2) x_3^2 + x_1 x_2 x_3 + x_1^2x_2^2 + a_6 \\
		S_{m+1}(x_1\ldots,x_m,x_{m+1}) & = \Res_X(S_m(x_1,\dots,x_{m-1},X),S_3(x_m,x_{m+1},X)) 
      \end{align*}
      and the degree of $S_{m+1}$ in each variable $x_i$ is $2^{m-1}$. Note that these formulas have also been outlined by Petit and Quisquater \cite[Section 5]{PetitQ12} 
      who also refer to Diem \cite{diem2011}.
      
     To describe the Weil descent along those summation polynomials (see e.g.~\cite[Section 4]{PetitQ12})
     we fix a basis $1, z, \ldots, z^{n-1}$ of $\BFn$ over $\BF$ and
     let $W$ be a subvector space in $\BFn$ of dimension $n'$ and
     basis $\nu_1,\ldots,\nu_{n'}$ over $\BF$.
     We introduce $m n'$ variables $y_{ij}$ that model the linear constraints
    \[
    		x_i = \sum_{l=1}^{n'} y_{il} \nu_l ,
    	\]
    	set $x_{m+1}$ to an arbitrary element $c \in \BFn$, and obtain the equation system
    \begin{align*}
      S_{m+1}(x_1,\ldots,x_m,c) & = S_{m+1} \left( \sum_{l=1}^{n'} y_{1l} \nu_l, \ldots , \sum_{l=1}^{n'} y_{ml} \nu_l, c \right) \\
	& = f_0(y_{ij}) + z f_1(y_{ij}) + \cdots + z^{n-1} f_{n-1}(y_{ij})
    \end{align*}
    The first fall degree of interest is that of the reduced polynomial system
    \begin{align}
    \label{lbl:polys-from-weil-descent}
      s_k \equiv f_k \bmod (y_{11}^2 - y_{11}^{ }, \ldots , y_{mn'}^2 - y_{mn'}^{ }) \mbox{, where } k = 0,\ldots , n-1 .
    \end{align}
    Note that $s_0,\ldots,s_{n-1} \in \BF[y_{11},\ldots,y_{m n'}]/(y_{11}^2-y_{11},\ldots, y_{m n'}^2-y_{m n'})$.
    
    By the definition of the first fall degree we are interested in the highest degree homogeneous part of $s_0,\ldots, s_{n-1}$
    whose degree can be determined as follows.
    
    \begin{lem}
    \label{lem:summation-monomial}
      Let $m \geq 3$. The highest degree homogeneous part of the polynomial system $s_0,\ldots,s_{n-1} \in \BF[y_{11},\ldots,y_{m n'}]/(y_{11}^2-y_{11},\ldots, y_{m n'}^2-y_{m n'})$ from \autoref{lbl:polys-from-weil-descent} is induced by the monomial
      \[
      	(x_1 \cdots x_m)^{2^{m-1}-1} \cdot x_{m+1}
 	  \]
      in the summation polynomial $S_{m+1}(x_1,\ldots,x_m,x_{m+1})$, and hence its degree is less than or equal to $m^2-m$.
    \end{lem}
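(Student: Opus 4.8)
The plan is to track how the degree of a monomial of $S_{m+1}$ collapses under the Weil descent followed by reduction modulo the field equations $y_{il}^2=y_{il}$, the point being that the Frobenius makes $2$-powers linear. Concretely, after substituting $x_i=\sum_{l=1}^{n'} y_{il}\nu_l$ and reducing, one has $(\sum_l y_{il}\nu_l)^{2^t}=\sum_l y_{il}^{2^t}\nu_l^{2^t}\equiv\sum_l y_{il}\nu_l^{2^t}$, so the reduction of $x_i^{2^t}$ is the linear form $L_{i,t}=\sum_l \nu_l^{2^t}\,y_{il}$ in the block $y_{i1},\dots,y_{in'}$. Writing $k$ in binary, $x_i^{k}$ is the product of the $x_i^{2^t}$ over the positions $t$ at which $k$ has a nonzero digit, so its reduction is a product of $\wt(k)$ linear forms and has degree at most $\wt(k)$; since the blocks $y_{i\bullet}$ attached to distinct $i$ are disjoint, the reduction of $\prod_{i=1}^{m}x_i^{k_i}$ (with $x_{m+1}=c$ contributing only a constant) has degree at most $\sum_{i=1}^m\wt(k_i)$.

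First I would convert this into the upper bound. Since $S_{m+1}$ has degree $2^{m-1}$ in each variable, every exponent satisfies $k_i\le 2^{m-1}$, and any such integer has $\wt(k_i)\le m-1$, with equality forcing $k_i=2^{m-1}-1$. Hence $\sum_{i=1}^m\wt(k_i)\le m(m-1)=m^2-m$, which is already the claimed degree bound, and equality can arise only from monomials of the form $(x_1\cdots x_m)^{2^{m-1}-1}\cdot x_{m+1}^{\,j}$. It then remains to determine which powers $j$ actually occur alongside $(x_1\cdots x_m)^{2^{m-1}-1}$; equivalently, to compute the coefficient $g(x_{m+1})$ of $(x_1\cdots x_m)^{2^{m-1}-1}$ in $S_{m+1}$ and to show $g(x_{m+1})=\lambda\,x_{m+1}$ for some $\lambda\neq 0$.

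For this last step the plan is to induct on $m$ via $S_{m+1}=\Res_X(S_m(x_1,\dots,x_{m-1},X),S_3(x_m,x_{m+1},X))$. Writing $S_3(x_m,x_{m+1},X)=AX^2+BX+C$ with $A=x_m^2+x_{m+1}^2$, $B=x_mx_{m+1}$, $C=x_m^2x_{m+1}^2+a_6$, and $S_m(x_1,\dots,x_{m-1},X)=\sum_j p_jX^j$ with $d=2^{m-2}$, one has $\Res_X=A^{d}P(\beta_1)P(\beta_2)$ for the roots $\beta_1,\beta_2$ of $AX^2+BX+C$. Extracting $(x_1\cdots x_{m-1})^{2d-1}$ reduces the task to the near-top coefficients $P_j^{(T)}$ of $S_m$, i.e.\ the coefficients of $X^{j}\prod_{i\in T}x_i^{d}\prod_{i\notin T}x_i^{d-1}$ for $T\subseteq\{1,\dots,m-1\}$, because each exponent $2d-1$ must split as $d+(d-1)$ across $P(\beta_1)$ and $P(\beta_2)$. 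The outcome is $A^{d}$ times a symmetric function of $\beta_1,\beta_2$, hence a polynomial in $B/A$ and $C/A$; a useful simplification is that in this polynomial the $x_m$-degree has the same parity as the exponent of $\sigma_1=B/A$, so that extracting the odd power $x_m^{2d-1}$ isolates the $\sigma_1$-odd part. The base case $m=3$ is the direct resultant computation for two quadratics and yields $g(x_{m+1})=x_{m+1}$.

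I expect the induction step to be the main obstacle: the inductive statement about $g$ alone does not control the coefficients $P_j^{(T)}$, so the hypothesis will have to be strengthened to describe enough of the subleading coefficient structure of $S_m$ (for instance, its coefficients at every multidegree whose entries lie in $\{d,d-1\}$) and then shown to be self-propagating through the resultant. The delicate point is to verify that the factor $A^{d}$ clears the denominators $B/A$ and $C/A$ so that, after extracting $x_m^{2d-1}$, exactly the degree-one term $\lambda\,x_{m+1}$ survives; here the symmetry of $S_{m+1}$ in $x_m\leftrightarrow x_{m+1}$ (equivalently, the symmetry in $x_m,x_{m+1}$ of the ambient coefficient of $(x_1\cdots x_{m-1})^{2d-1}$) is the main tool I would use to organize and cross-check the bookkeeping. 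Once $g(x_{m+1})=\lambda\,x_{m+1}$ is established, the top homogeneous part of the system is the reduction of $\lambda\,c\,(x_1\cdots x_m)^{2^{m-1}-1}$, induced by the monomial $(x_1\cdots x_m)^{2^{m-1}-1}x_{m+1}$, of degree at most $m^2-m$.
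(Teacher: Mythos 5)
Your reduction of the lemma to a single question about $S_{m+1}$ is correct and is the same as the paper's: the Frobenius-linearity argument gives degree at most $\sum_i \wt(k_i)$ after descent, the bound $k_i\le 2^{m-1}$ forces $\wt(k_i)\le m-1$ with equality only at $k_i=2^{m-1}-1$, and so everything hinges on showing that the coefficient $g(x_{m+1})$ of $(x_1\cdots x_m)^{2^{m-1}-1}$ in $S_{m+1}$ is a nonzero multiple of $x_{m+1}$ (plus possibly terms divisible by further $x_i$'s, which cannot raise the weight). But the proposal stops exactly where the work begins. You yourself flag the induction step as ``the main obstacle'' and say the inductive hypothesis ``will have to be strengthened,'' without formulating the strengthening or verifying that it propagates through the resultant; the nonvanishing of $\lambda$ is only checked in the base case, and the possibility of cancellation among several contributions to the relevant coefficient at level $m+1$ is not excluded. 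As it stands this is a plan, not a proof, and what is missing is precisely the content of the lemma beyond the easy Hamming-weight upper bound.

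For comparison, the paper carries out this induction with a leaner strengthened hypothesis than the one you suggest: rather than tracking the coefficients of $S_m$ at every multidegree with entries in $\{d,d-1\}$, it tracks only the multiples of $(x_1\cdots x_{m-1})^{2^{m-2}-1}$ occurring in the coefficients $c_{i,m}$ of $S_m=\sum_i c_{i,m}X^i$, and proves inductively that the only such multiples are $(x_1\cdots x_{m-1})^{2^{m-2}}$ in $c_{0,m}$ and $(x_1\cdots x_{m-1})^{2^{m-2}-1}$ in $c_{1,m}$. Working with the Leibniz expansion of the Sylvester determinant (rather than with the roots $\beta_1,\beta_2$, which sidesteps your worry about clearing the denominators $B/A$, $C/A$ against $A^d$), it shows that any term producing such a multiple at level $m+1$ must arise from the products $c_{0,m}c_{0,m}$ or $c_{0,m}c_{1,m}$ taken from the first two rows, each realized by a unique permutation; hence no cancellation over $\mathbb{F}_{2^n}$ is possible and the coefficient of $(x_1\cdots x_m)^{2^{m-1}-1}x_{m+1}$ is $1$. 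If you want to complete your argument, this is the self-propagating inductive statement you need to supply and verify.
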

    
    \begin{proof}
      First, we show the existence of the monomial $(x_1 \cdots x_m)^{2^{m-1}-1} \cdot x_{m+1}$ in $S_{m+1}(x_1,\ldots,x_m,x_{m+1})$. We have
      \begin{align*}
	S_3(x_1,x_2,x_3) & = (x_1^2+x_2^2) x_3^2 + x_1 x_2 x_3 + x_1^2x_2^2 + a_6 \\
	S_{m+1}(x_1\ldots,x_m,x_{m+1}) & = \Res_X(S_m(x_1,\dots,x_{m-1},X),S_3(x_m,x_{m+1},X)) 
      \end{align*}
      and the degree of $S_{m+1}$ in each variable $x_i$ is $2^{m-1}$.
      The resultant of $f,g \in \BFn[X]$ of degree $k$ and $l$ is the determinant of the Sylvester matrix
      \begin{align*}
	\Res_X(f,g) & = \det \big( \Syl (f,g) \big)  \\
	  & = \det
	    \begin{pmatrix}
	      f_k &     & \cdots &        & f_0    &        &        &     \\
		  & f_k &        & \cdots &        &  f_0   &        &     \\
		  &     & \ddots &        &        &        & \ddots &     \\
		  &     &        &  f_k   &        & \cdots &        & f_0 \\
	      g_l &     & \cdots &        &  g_0   &        &        &     \\
		  & g_l &        & \cdots &        &  g_0   &        &     \\
		  &     & \ddots &        &        &        & \ddots &     \\
		  &     &        & g_l    &        & \cdots &        & g_0  \\
	    \end{pmatrix}
      \end{align*}
      That is, with
      \begin{align*}
	S_3(x_m,x_{m+1},X) & = (x_m^2+x_{m+1}^2) X^2 + x_mx_{m+1} X + x_m^2x_{m+1}^2 + a_6\\
	S_m(x_1,\dots,x_{m-1},X) & = c_{2^{m-2},m}X^{2^{m-2}} + \cdots + c_{0,m}
      \end{align*}
      where each $c_{i,m} \in \BFn[x_1,\ldots,x_{m-1}]$, we have
      \begin{align*}
	S_{m+1}(x_1\ldots,x_m,x_{m+1}) = \det \big( \Syl (S_m,S_3) \big) .
	\end{align*}
      To be concrete, $\Syl (S_m,S_3)$ is the matrix
       \begin{align*}
       \begin{pmatrix}
	  c_{2^{m-2},m} &  c_{2^{m-2}-1,m}   & \cdots &    c_{0,m}    &  0      &             \\
	  0 & c_{2^{m-2},m} &    \cdots    &   c_{1,m}     &  c_{0,m}   &             \\
	  x_m^2+x_{m+1}^2 &  x_mx_{m+1}   & x_m^2x_{m+1}^2 + t &  &        &                       \\
	  & \ddots &        &   \ddots       &       &      \\
	  &        & x_m^2+x_{m+1}^2    &   x_mx_{m+1}     & x_m^2x_{m+1}^2 + t          \\
	\end{pmatrix}
	\end{align*}
      \noindent with a total of $2^{m-2}+2$ rows and columns.
      In order to prove our claim we have to
      identify specific summands in the Leibniz formula of the determinant. That is, we consider
      \begin{align}
      \label{eqn:leibniz}
      	\det \big( \Syl (S_m,S_3) \big) = \sum_{\pi} \mathrm{sgn}(\pi) \prod_{i=1}^{2^{m-2}+2} \Syl (S_m,S_3)_{i,\pi_i}
      \end{align}
      and argue that for the relevant summands no cancellation over $\BFn$ occurs. Note that the sign of a permutation is $1 \in \BFn$.
      
      \noindent Step $1$: Prove by induction (start with $x_1^2x_2^2$ in $S_3$) that $S_{m+1}$ contains the monomial $(x_1 \cdots x_m)^{2^{m-1}}$ in its term $c_{0,m+1}$.
	  For that we consider the permutation
	  \begin{align}
	  \label{eqn:permutation-sigma}
	  	\sigma & = \Big( \sigma_1,\ldots , \sigma_{2^{m-2}+2} \Big) \\
	  		& = \left(2^{m-2}+1,2^{m-2}+2,1,2,\ldots,2^{m-2} \right) \notag
	  \end{align}
	  and obtain
	  \begin{align*}
		S_{m+1}(x_1\ldots,x_m,x_{m+1}) &  = \mathrm{sgn}(\sigma) \prod_{i=1}^{2^{m-2}+2} \Syl (S_m,S_3)_{i,\sigma_i} + \cdots \\
		& = c_{0,m} c_{0,m} \prod_{i=1}^{2^{m-2}} (x_m^2+x_{m+1}^2) + \ldots \\
	 	& = \left( (x_1 \cdots x_{m-1})^{2^{m-2}} \right)^2 \cdot x_m^{2^{m-1}} + \ldots \\
	  	& = (x_1 \cdots x_{m-1} x_m)^{2^{m-1}} + \ldots
      \end{align*}
      Note that specifying $\sigma_1 = 2^{m-2}+1$ and $\sigma_2 = 2^{m-2}+2$ determines $\sigma$ since the remaining entries in $\Syl (S_m,S_3)$ form an upper triangular matrix with $x_m^2+x_{m+1}^2$ on the diagonal.
      
      \noindent Step $2$: Prove by induction (start with $x_1x_2x_3$ in $S_3$) that $S_{m+1}$ contains the monomial $(x_1 \cdots x_m)^{2^{m-1}-1} \cdot x_{m+1}$, i.e.~$(x_1 \cdots x_m)^{2^{m-1}-1}$ in its term $c_{1,m+1}$.
	  For that we consider the permutation
	  \begin{align}
  	  \label{eqn:permutation-tau}
	  	\tau & = \Big( \tau_1, \ldots , \tau_{2^{m-2} + 2} \Big) \\
	  		& = \left( 2^{m-2},2^{m-2}+2,1,\ldots,2^{m-2}-1,2^{m-2}+1 \right) \notag
	  \end{align}
	  and  obtain
      \begin{align*}
	& S_{m+1}(x_1\ldots,x_m,x_{m+1}) \\
	& = \mathrm{sgn}(\tau) \prod_{i=1}^{2^{m-2}+2} \Syl (S_m,S_3)_{i,\tau_i} + \cdots \\
	& = c_{1,m} c_{0,m} \cdot x_m x_{m+1} \prod_{i=1}^{2^{m-2}-1} (x_m^2+x_{m+1}^2) + \ldots \\
	& = (x_1 \cdots x_{m-1})^{2^{m-2}-1} \cdot (x_1 \cdots x_{m-1})^{2^{m-2}} \cdot x_m x_{m+1} (x_m^2)^{2^{m-2}-1} + \ldots \\
	& = (x_1 \cdots x_{m-1} x_m)^{2^{m-1}-1} \cdot x_{m+1} + \ldots
      \end{align*}
      Note that specifying $\tau_1 = 2^{m-2}$ and $\tau_2 = 2^{m-2}+2$ determines $\tau$ since the remaining entries in $\Syl (S_m,S_3)$ form an upper triangular matrix with $x_m^2+x_{m+1}^2, \ldots , x_m^2+x_{m+1}^2, x_m x_{m+1}$ on the diagonal.
	  
      Second, in order to exclude potential cancellations we have to show that the permutations $\sigma$ in \eqref{eqn:permutation-sigma} and $\tau$ in \eqref{eqn:permutation-tau} are the only possible choices to produce the monomials $(x_1 \cdots x_m)^{2^{m-1}}$ and $(x_1 \cdots x_m)^{2^{m-1}-1} \cdot x_{m+1}$ in $S_{m+1}$, respectively. For that, we prove by induction (start with $x_1x_2$ in $S_3$) that the only multiples of $(x_1 \cdots x_m)^{2^{m-1}-1}$ in the coefficients of $S_{m+1}$ are $(x_1 \cdots x_m)^{2^{m-1}}$ in $c_{0,m+1}$ and $(x_1 \cdots x_m)^{2^{m-1}-1}$ in $c_{1,m+1}$. Indeed, the factor $(x_1 \cdots x_{m-1})^{2^{m-1}-1}$ in the variables $x_1,\ldots,x_{m-1}$ can only be produced by products $c_{i,m} \cdot c_{j,m}$ of entries taken from the first two rows of the Sylvester matrix $\Syl (S_m,S_3)$. Since the degree of $S_{m}$ in each variable $x_1,\ldots,x_{m-1}$ is $2^{m-2}$, each of the entries $c_{0,m}, \ldots, c_{2^{m-2},m}$ is a sum of monomials in the variables $x_1,\ldots,x_{m-1}$ where each monomial is either
      \begin{enumerate}[label=(\roman*)]
      	\item no multiple of $(x_1 \cdots x_{m-1})^{2^{m-2}-1}$ or
      	\item a multiple $( x_1 \cdots x_{m-1})^{2^{m-2}-1} \cdot x_1^{\delta_1}\cdots x_{m-1}^{\delta_{m-1}}$, with $\delta_i \in \{0,1\}$. 
      \end{enumerate}
	Therefore, the monomials in the products $c_{i,m}\cdot c_{j,m}$ that contribute to the determinant \eqref{eqn:leibniz} occur in the following forms
\begin{eqnarray}
	\label{eqn:1} && ( ( x_1 \cdots x_{m-1})^{2^{m-2}-1})^2 \cdot x_1^{\delta_1 + \delta'_1}\cdots x_{m-1}^{\delta_{m-1}+\delta'_{m-1}} \\
	\label{eqn:2} && ( x_1 \cdots x_{m-1})^{2^{m-2}-1} \cdot x_1^{\delta_1}\cdots x_{m-1}^{\delta_{m-1}} \cdot \mu \\
	\label{eqn:3} && \mu \cdot \mu' 
\end{eqnarray}
where $\mu$ and $\mu'$ denote elements that are no multiples of $(x_1 \cdots x_{m-1})^{2^{m-2}-1}$.
Consequently, a monomial in the product $c_{i,m}\cdot c_{j,m}$ that is now a multiple of $(x_1 \cdots x_{m-1})^{2^{m-1}-1}$ can only arise in case \eqref{eqn:1} if for each $k=1,\ldots,m-1$ the following condition holds
\[
	2 \cdot \left( {2^{m-2}-1} \right) + \delta_k + \delta'_k \geq 2^{m-1}-1 \iff \delta_k + \delta'_k \geq 1 .
\]
Due to the degree restriction of $S_{m}$ a product $c_{i,m}\cdot c_{j,m}$ where the monomials in $c_{i,m}$ and $c_{j,m}$ are all of the form \eqref{eqn:2} or \eqref{eqn:3} cannot produce a multiple of $(x_1 \cdots x_{m-1})^{2^{m-1}-1}$. Therefore, we are left with products of the terms $c_{0,m}$ and $c_{1,m}$ by the induction hypothesis.
Since $c_{1,m}\cdot c_{1,m}$ only produces $(x_1 \cdots x_{m-1})^{2^{m-1}-2}$, the permutations $\pi = (\pi_1,\pi_2,\ldots,\pi_{2^{m-2}+2})$ in the Leibniz formula \eqref{eqn:leibniz} that produce multiples of the monomial $(x_1 \cdots x_{m-1})^{2^{m-1}-1}$ must have either $(\pi_1, \pi_2) = (\sigma_1, \sigma_2)$ or $(\pi_1,\pi_2) = (\tau_1,\tau_2)$ as given in \eqref{eqn:permutation-sigma} and \eqref{eqn:permutation-tau}, respectively. This determines our permutations $\sigma$ and $\tau$ completely.

      To finish the proof, our degree claim in \autoref{lem:summation-monomial} is argued as follows. The variables $y_{ij}$ of the $s_k$ are over $\BF$ where taking squares is a linear operation. Therefore the degrees of the homogeneous parts of the system $s_0,\ldots,s_{n-1}$ depend only on the Hamming weight 
      $\wt (x_1^{\alpha_1} \cdots x_m^{\alpha_m}) = \sum \wt(\alpha_i)$ of a monomial in $S_{m+1}$.
      Since the degree of $S_{m+1}$ in each variable $x_i$ is $2^{m-1}$ the monomial $(x_1 \cdots x_m)^{2^{m-1}-1} \cdot x_{m+1}$,
      when $x_{m+1}$ is set to an element $c \in \BFn$, produces the highest Hamming weight $\sum_{i=1}^m \wt(2^{m-1}-1) = m(m-1)$.
	  To be precise, we consider
      \[
		x_i^{2^j} = (\sum_{l=1}^{n'} y_{il} \nu_l )^{2^j} = \sum_{l=1}^{n'} y_{il} \nu_l^{2^j}
      \]
      and obtain
      \begin{equation}
      \label{eq:special-element}
	(x_1 \cdots x_m)^{2^{m-1}-1} \cdot c = c \prod_{i=1}^m \prod_{j=0}^{m-2} \sum_{l=1}^{n'} y_{il} \nu_l^{2^j}
      \end{equation}
      which is of degree less than or equal to $m(m-1)$ in the variables $y_{ij}$.
    \end{proof}
    
    We are ready to prove the main result.
    
    \begin{thm}
    \label{thm:semaev-first-fall}
    	  Let $n' \geq m \geq 3$ and $c \in \BFn \setminus \{0\}$, and consider the
    	  polynomial system $s_0,\ldots,s_{n-1} \in \BF[y_{11},\ldots,y_{m n'}]/(y_{11}^2-y_{11},\ldots, y_{m n'}^2-y_{m n'})$ from \autoref{lbl:polys-from-weil-descent}, that results from the Weil descent along the summation polynomial $S_{m+1}(x_1,\ldots,x_m,c)$.
      The first fall degree of $s_0,\ldots,s_{n-1}$ is less than or equal to $m^2-m+1$.
    \end{thm}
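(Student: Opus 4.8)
The plan is to reduce the theorem, via \autoref{dfn:first-fall-degree-}, to the production of a single nontrivial \emph{linear} syzygy on the top-degree part of the system, and then to read that syzygy directly off the product structure supplied by \autoref{lem:summation-monomial}. By that lemma the homogeneous part of maximal degree $d:=m^2-m$ of $s_0,\ldots,s_{n-1}$ is induced by the monomial $(x_1\cdots x_m)^{2^{m-1}-1}\cdot x_{m+1}$, so by \eqref{eq:special-element} it is the class in $\mathrm{Gr}(A_{\BFn})=\BFn[y_{il}]/(y_{il}^2)$ of
\[
	P = c\prod_{i=1}^{m}\prod_{j=0}^{m-2}\ell_{i,j},\qquad \ell_{i,j}=\sum_{l=1}^{n'}y_{il}\,\nu_l^{2^{j}},
\]
a product of $d$ linear forms, the $m-1$ factors of the $i$-th block involving only the variables $y_{i1},\ldots,y_{in'}$. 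Writing $\bar P=\pi_d(P)=\sum_{k=0}^{n-1}z^k\bar s_k$, the homogeneous parts $\bar s_k=\pi_d(s_k)$ are exactly the $\BF$-coordinates of $\bar P$ with respect to $1,z,\ldots,z^{n-1}$; in particular $\bar P$ lies in the span $\bar V=\sum_k\BFn\bar s_k$.

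The key observation I would exploit is that $\bar P$ is annihilated by one of its own linear factors. In $\mathrm{Gr}(A_{\BFn})$ every linear form $\ell$ supported on a single block satisfies $\ell^2=0$, because squaring kills the cross terms in characteristic $2$ and sends each $y_{il}^2$ to $0$. Multiplying $\bar P$ by $\ell_{1,0}$ therefore reproduces the square-free degree-$(d+1)$ part of $\ell_{1,0}\cdot P=c\,\ell_{1,0}^2\prod_{(i,j)\neq(1,0)}\ell_{i,j}$, and every monomial of the right-hand side carries a repeated block-$1$ variable coming from $\ell_{1,0}^2$, hence vanishes in $\mathrm{Gr}(A_{\BFn})$. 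The contributions of the non-square-free part of $P$ produce nothing in degree $d+1$ for degree reasons, since a degree-$d$ monomial with a repeated variable involves at most $d-1$ distinct variables and cannot reach $d+1$ distinct ones after multiplication by a single linear form. This gives the identity $\ell_{1,0}\,\bar P=0$ in $[\mathrm{Gr}(A_{\BFn})]_{d+1}$.

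I would then conclude as follows. Since $n'\geq m$, the square-free part of each block factor $\prod_{j=0}^{m-2}\ell_{i,j}$ has, as coefficient of a square-free monomial, a Moore determinant of $m-1$ of the $\BF$-linearly independent elements $\nu_l$, hence is nonzero; as the blocks use disjoint variables and $c\neq 0$ this yields $\bar P\neq 0$, while $\ell_{1,0}\neq 0$ is clear. Expanding $\bar P=\sum_a\lambda_a h_a$ in any $\BFn$-basis $h_1,\ldots,h_r$ of $\bar V$, the relation above reads $\sum_a(\lambda_a\ell_{1,0})h_a=0$ with $(\lambda_a\ell_{1,0})_a\in S^r_1$ nonzero; because the generators of $U$ are homogeneous of degree $d\geq 2$ we have $U_1=0$, so this shows $\bar\phi_1$ is not injective and hence $D_{ff}(\bar V)\leq d+1$ by \autoref{dfn:first-fall-degree}. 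By \autoref{dfn:first-fall-degree-}, $D_{ff}(V)$ equals either $d$ (if the top-degree parts collapse) or $D_{ff}(\bar V)$, so in both cases $D_{ff}(V)\leq d+1=m^2-m+1$. The conceptual crux is the annihilation $\ell_{1,0}\bar P=0$; the only genuinely technical points are justifying that $\ell_{1,0}\bar P$ agrees with the square-free part of $\ell_{1,0}P$ (so that the square-reduction does not reintroduce surviving terms) and confirming $\bar P\neq 0$, both of which are where I expect the careful bookkeeping to lie.
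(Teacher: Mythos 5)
Your proposal is correct and follows essentially the same route as the paper: you identify the top-degree part as the product of linear forms coming from the monomial $(x_1\cdots x_m)^{2^{m-1}-1}x_{m+1}$, produce the degree-fall relation by multiplying by a linear factor whose square vanishes in $\mathrm{Gr}(A_{\BFn})$ (the paper's $x_k P_0=0$), and verify non-vanishing via the Moore matrix of the $\nu_l$. The only cosmetic difference is that you extract a nonzero Moore-determinant coefficient directly, whereas the paper completes the matrix $(\nu_l^{2^j})$ to an invertible change of variables citing \cite[Lemma 3.51]{LidlN86}; both rest on the same fact.
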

    
    \begin{proof}
      	Consider the finite-dimensional filtered algebra
      	\[
			A_{\BF} = \BF[y_{11},\ldots,y_{m n'}]/(y_{11}^2-y_{11},\ldots, y_{m n'}^2-y_{m n'}) .
		\]      	      	
      	The linear span 
      	\[
			\sum_{j=0}^{n-1} \BF s_j
        \]
    		is inside the degree $d=m^2-m$ subspace of the filtered algebra $A_{\BF}$ due to
    		\autoref{lem:summation-monomial}. By \cite[Corollary 2.4]{HodgesPS14}
    		an extension of the base field, i.e.
  		\[
        		A_{\BFn} = \BFn[y_{11},\ldots,y_{m n'}]/(y_{11}^2-y_{11},\ldots, y_{m n'}^2-y_{m n'}) ,
        	\]
        	does not affect the first fall degree. That is,
		\[    		
    		 	D_{ff}\left(\sum_{j=0}^{n-1} \BF s_j \right) = D_{ff}\left( \sum_{j=0}^{n-1} \BFn s_j \right) .
    		\]
    		By \cite[Definition 2.2]{HodgesPS14}, the first fall degree of the subspace $\sum_{j=0}^{n-1} \BFn s_j$ of $A_{\BFn}$ is
    		\[
    			D_{ff} \left( \sum_{j=0}^{n-1} \BFn s_j \right) =
	    		\begin{cases}
    				d=m^2-m & \mbox{, $\dim_{\BFn}\bar{V} < \dim_{\BFn}V$,}\\
    				D_{ff}(\bar{V}) & \mbox{, else,}
	    		\end{cases}
	    	\]
    		where $\bar{V}$ denotes the induced homogeneous subspace of $\sum_{j=0}^{n-1} \BFn s_j$ in the associated graded ring
    		\[
			\mathrm{Gr}(A_{\BFn}) = \BFn[y_{11},\ldots,y_{m n'}]/(y_{11}^2,\ldots, y_{m n'}^2) .
		\]
		If $\dim_{\BFn}\bar{V} < \dim_{\BFn}V$, our claim follows.
		Otherwise we consider the polynomial
	 	\[
			P_0 = c \prod_{i=1}^m \prod_{j=0}^{m-2} \sum_{l=1}^{n'} y_{il} \nu_l^{2^j}
	    \]
	    which is an element of the homogeneous subspace $\bar{V}$ by \autoref{lem:summation-monomial}, and in particular \autoref{eq:special-element}. Now, for any
    		\[
    			x_k = \sum_{l=1}^{n'} y_{kl} \nu_l
	    	\]
  	 	we have a non-trivial relation
   	 	\[
		    	x_k P_0=c \sum_{l=1}^{n'} y_{kl}^2 \nu_l^2 \cdot \prod_{j=1}^{m-2} \sum_{l=1}^{n'} y_{kl} \nu_l^{2^j} \cdot \prod_{i=1, i\neq k}^m \prod_{j=0}^{m-2} \sum_{l=1}^{n'} y_{il} \nu_l^{2^j} = 0 \in \mathrm{Gr}(A_{\BFn})
		\]
		of degree $d+1=m^2-m+1$ unless $P_0 = 0 \in \mathrm{Gr}(A_{\BFn})$. Therefore it remains to show that $P_0 \neq 0$. For that purpose, we recall that $c \in \BFn \setminus \{0\}$, $v_1, \ldots, v_{n'}$ are linearly independent, and $n' \geq m$. Consider the linear change of variables
      \[
	Y_{ij} = x_i^{2^j} = (\sum_{l=1}^{n'} y_{il} \nu_l )^{2^j} = \sum_{l=1}^{n'} y_{il} \nu_l^{2^j} .
      \]
      This is induced by the $m \times n'$ matrix
      \[
	      \renewcommand{\arraystretch}{1.5}
    		  \begin{pmatrix}
			\nu_1 & \cdots & \nu_{n'} \\ 
			\nu_1^2 & \cdots & \nu_{n'}^2 \\ 
			\vdots & \ddots & \vdots \\ 
			\nu_1^{2^{m-2}} & \cdots & \nu_{n'}^{2^{m-2}}
	      \end{pmatrix} 
    		  \renewcommand{\arraystretch}{1}
      \]
      that can be completed to an invertible linear transform by \cite[Lemma 3.51]{LidlN86}, since we have assumed $v_1, \ldots, v_{n'}$ to be linearly independent and $n' \geq m$. By using such an invertible linear transform 
on any block of variables 
$$y_{i1},\ldots,y_{in'}$$      
      we get new 
      variables 
      $$Y_{10}, \ldots, Y_{m,n'-1}.$$
Under this change of variables $P_0$ is mapped to the non-zero element
\[
c \prod_{i=1}^m \prod_{j=0}^{m-2} Y_{ij} \in \BFn[Y_{10},\ldots,Y_{m,n'-1}]/(Y_{10}^2,\ldots, Y_{m,n'-1}^2 ). \qedhere
\]
    \end{proof}
    
    
    \begin{rem}
    \label{rem:ff-m2}
      Our \autoref{thm:semaev-first-fall} remains true also in the case $m=2$ with
      first fall degree less than or equal to $2\cdot1+1 =3$.
      This bound is not sharp though, in fact the first fall degree in the case $m=2$
      equals $2$ \cite[Corollary 4.11 and Remark 4.12]{KostersY15}.
    \end{rem}


	 \begin{table}[ht]
    \caption{Empirical data for the Weil descent along the summation polynomial $S_{m+1}$ over $\BFn$ with $n'$-dimensional factor basis. Displayed are the observed first fall degree $D_{ff}$, degree of regularity $D_{reg}$, the time in seconds $s$ and space requirement in gigabyte GB. All values are averaged over $10$ repetitions. For the case $m=2$ see also \autoref{rem:ff-m2}.}
    \label{table:experiments-semaev}
      \begin{tabular}{| >{$}c<{$} | >{$}c<{$} | >{$}c<{$} | >{$}c<{$} | >{$}c<{$} | >{$}c<{$}| >{$}c<{$}| >{$}c<{$}|}
	\hline
	m & n  & n' & m(m-1)+1 & D_{ff} & D_{reg} & s & \mbox{GB} \\
	\hline
	2 & 34 & 17 & 3   & 2      & 4       & 188    & 1.2   \\
	2 & 35 & 18 & 3   & 2      & 4       & 1237   & 16.1  \\
	2 & 36 & 18 & 3   & 2      & 4       & 1342   & 16.4  \\
	2 & 37 & 19 & 3   & 2      & 5       & 2542   & 29.2  \\
	2 & 38 & 19 & 3   & 2      & 5       & 2815   & 25.2  \\
	2 & 39 & 20 & 3   & 2      & 5       & 4785   & 45.6  \\
	2 & 40 & 20 & 3   & 2      & 5       & 4858   & 46.3  \\
	2 & 41 & 21 & 3   & 2      & 5       & 7930   & 65.3  \\
	2 & 42 & 21 & 3   & 2      & 5       & 8901   & 66.7  \\
	2 & 43 & 22 & 3   & 2      & 5       & 16816  & 95.5  \\
	2 & 44 & 22 & 3   & 2      & 5       & 15690  & 96.8  \\
	2 & 45 & 23 & 3   & 2      & 5       & 38352  & 140.0 \\
	2 & 46 & 23 & 3   & 2      & 5       & 31735  & 140.7 \\
	2 & 47 & 24 & 3   & 2      & 5       & 103200 & 207.7 \\
	2 & 48 & 24 & 3   & 2      & 5       & 86636  & 208.2 \\
	\hline
	3 & 13 & 5  & 7  & 7      & 7       & 14     & 0.6   \\
	3 & 14 & 5  & 7  & 7      & 7       & 14     & 0.7   \\
	3 & 15 & 5  & 7  & 7      & 7       & 14     & 0.7   \\
	3 & 16 & 6  & 7  & 7      & 7       & 597    & 13.5  \\
	3 & 17 & 6  & 7  & 7      & 7       & 656    & 13.3  \\
	3 & 18 & 6  & 7  & 7      & 7       & 729    & 34.1  \\
	3 & 19 & 7  & 7  & 7      & 7       & 16571  & 92.2  \\
	3 & 20 & 7  & 7  & 7      & 7       & 17684  & 101.2 \\
	3 & 21 & 7  & 7  & 7      & 7       & 17681  & 90.2  \\
	\hline      
	4 & 13 & 4  & 13 & 13     & 13      & 467    & 25.0  \\
	4 & 14 & 4  & 13 & 13     & 13      & 487    & 25.8  \\
	4 & 15 & 4  & 13 & 13     & 13      & 592    & 26.3  \\
	4 & 16 & 4  & 13 & 13     & 13      & 755    & 27.6  \\
	\hline      
      \end{tabular}
    \end{table}  
    
	\section{Experiments and Conclusion}
	
    In the light of the first fall degree bound given in \autoref{thm:semaev-first-fall}
    we computed a Gr{\"o}bner basis
    for the ideal resulting from the Weil descent along the summation
    polynomial $S_{m+1}(x_1,\ldots,x_m,x_{m+1})$ for $m=2,3,4$ on an
    AMD Opteron CPU with Magma's \verb+GroebnerBasis()+ function.
    Again, we set the verbose level to $1$ and extracted the empirical
    first fall degree $D_{ff}$
    as the step degree of the first step where new lower degree
    (i.e. less than step degree) polynomials are added.
    The empirical degree of regularity $D_{reg}$ is the highest step degree that
    appears during the Gr{\"o}bner basis computation.
    In each experiment we chose a random non-singular elliptic curve over $\BFn$,
    a random subvector space of dimension $n' = \lceil n/m \rceil$ as the factor basis,
    and set $x_{m+1}$ to the $x$-coordinate of a random point on the curve.
    The experimental results that extend the ones present in the literature by Petit and Quisquater \cite{PetitQ12} and Kosters and Yeo \cite{KostersY15} are displayed in \autoref{table:experiments-semaev}.
    
    Like Kosters and Yeo \cite[Section 5]{KostersY15} we observed a raise in the
    regularity degree for $m=2$ in our experiments
    and were able to verify their observation that with the low degree
    polynomials $W = \mbox{span} \{ 1,z,\ldots,z^{n'} \}$
    chosen as the factor basis (Cf. \cite[Section 4.5]{Semaev15}) the raise in the
    regularity degree was produced for slightly
    greater $n=45$.
    It would be very interesting to observe a raise in the degree of regularity
    for higher Semaev polynomials,
    but time and memory amounts become a serious issue for $m\geq3$. 
    However, such observations might neither falsify \cite[Assumption 2]{PetitQ12}
    that $D_{reg} = D_{ff} + \mbox{o}(1)$
    nor lead to further evidence that the gap between the degree of regularity
    and the first fall degree depends on $n$
    as discussed in \cite[Section 5.2]{HuangKY15}.    
    
    However, we believe our first fall degree bound $m^2-m+1$ for Semaev polynomials to be sharp for $m \geq 3$,
    and rephrase \cite[Assumption 2]{PetitQ12} as the following question:
    \begin{align}
    \label{lbl:question}
      D_{reg} = m^2 - m + 1 + \mbox{o}(1) \mbox{ ?}
    \end{align}
    Note that our upper bound on the first fall degree of summation polynomials is a first step
    towards answering this question. The first fall degree generically bounds
    the degree of regularity from below. Hence, any further lower bound on the
    degree of regularity associated to the specific case of a Weil descent along 
    summation polynomials can potentially answer \eqref{lbl:question}.
    
    Assuming an affirmative answer to \eqref{lbl:question}, we can furthermore sharpen
    the asymptotic complexity of the index calculus algorithm
    for the ECDLP as presented by Petit and Quisquater \cite[Section 5]{PetitQ12}.
    In the paragraph \textit{A new complexity analysis} of \cite[Section 5]{PetitQ12}
    it is argued that the complexity of the index calculus approach via summation
    polynomials is dominated by the Gr{\"o}bner basis computation.
    Under the assumption that the degree of regularity is approximated closely
    by the first fall degree \cite[Assumption 2]{PetitQ12}, Petit and Quisquater derive
    \cite[Proposition 4]{PetitQ12}, i.e. that the discrete logarithm can asymptotically
    be solved in sub-exponential time
    \begin{align}
    \label{lbl:runtime}
    		\mathcal{O} \left( 2^{c \log(n) \left( n^{2/3} + 1 \right) } \right),
    	\end{align}
    	where $c = 2 \omega /3$, $\omega$ is the linear algebra constant ($\omega = \log(7)/\log(2)$ is used in the following estimates), and $n^{2/3} + 1$
    	is an upper bound for the first fall degree of the $m$-th summation polynomial when $m=n^{1/3}$ \cite[Proposition 1]{PetitQ12}. They state that, by following this analysis the index calculus approach beats generic algorithms with run time $\mathcal{O}(2^{n/2})$ for any $n \geq N$ where $N$ is an integer approximately equal to $2000$. Now, based on \autoref{thm:semaev-first-fall} we assume $D_{reg} \approx m^2 - m + 1 = n^{2/3} - n^{1/3} + 1$ and sharpen \eqref{lbl:runtime} to
    	\begin{align}
    		\mathcal{O} \left( 2^{c \log(n) \left( n^{2/3} - n^{1/3} + 1 \right) } \right).
    	\end{align}
    Hence, the turning point to solve the ECDLP faster than a generic algorithm is an integer approximately equal to $1250$. Note that this is still far from cryptographically relevant sizes of $n$ up to $521$.
    
\bibliographystyle{amsplain}
\bibliography{firstfall}
    
\end{document}